\documentclass[12pt]{article}

\usepackage{amsmath,amsfonts,amssymb,amscd}
\newtheorem {Lemma}{Lemma}[section]
\newtheorem {Theorem}{Theorem}[section]

\newtheorem {Corollary}{Corollary}[section]
\newenvironment {proof} {\noindent {\bf Proof.}}{\hspace*{\fill}$\Box$\par\vspace{4mm}}

%\allowdisplaybreaks [4]

% Symbols for real and complex numbers

%%%%%%%%%%%%%%%%%%%%%%%%%%%%%%%%%%%%%%%

\begin{document}

\title{
A note on the minimum skew rank of  a graph}
\author{
Yanna Wang and Bo Zhou\\
Department of Mathematics, South China Normal
University,\\ Guangzhou 510631, P. R. China\\
e-mail: zhoubo@scnu.edu.cn}
\date{}
\maketitle

\begin{abstract}
The minimum skew rank $mr^{-}(\mathbb{F},G)$ of a graph $G$ over a
field $\mathbb{F}$ is the smallest possible rank among all skew
symmetric matrices over $\mathbb{F}$, whose ($i$,$j$)-entry (for
$i\neq j$) is nonzero whenever $ij$ is an edge in $G$ and is zero
otherwise. We give some new properties of the minimum skew rank of a graph, including
a characterization of the graphs $G$ with cut vertices over the
infinite field $\mathbb{F}$ such that $mr^{-}(\mathbb{F},G)=4$,
determination of the minimum skew rank of $k$-paths over a field $\mathbb{F}$, and an extending of an existing result to show that
$mr^{-}(\mathbb{F},G)=2match(G)=MR^{-}(\mathbb{F},G)$ for a connected graph $G$ with no
even cycles and a field $\mathbb{F}$, where $match(G)$ is the
matching number of $G$, and $MR^{-}(\mathbb{F},G)$ is the largest possible rank among all skew
symmetric matrices over $\mathbb{F}$.
\\ \\
{\bf Key words:}
Minimum skew rank, Skew-symmetric  matrix, $k$-tree, $k$-path,
Zero forcing number, Perfect matching\\
\\
{\bf AMS Classifications:}
05C50, 15A03
\end{abstract}

%%%%%%%%%%%%%%%%%%%%%%%%%%%%%%%%%%%%%%%%%%%%%%%%%%%%%%%%%%%%%
\section{Introduction}

We consider only simple graphs. Let $G$ be a graph with vertex set
$V_G$ and edge set $E_G$. Let $\mathbb{F}$ be a field. We adopt the
notation and terminology from \cite{AB} and \cite{We}.

An $n\times n$ matrix $A$ over $\mathbb{F}$ is skew-symmetric
(respectively, symmetric) if $A^T=-A$ (respectively, $A^T = A$),
where $A^T$ denotes the transpose of $A$.

For an $n\times n$ symmetric or skew-symmetric  matrix $A$, the
graph of $A$, denoted $G(A)$, is the graph with vertex set
$\{v_{1},v_{2},\dots, v_{n}\}$ and edge set $\{v_{i}v_{j}:
a_{ij}\neq 0, 1\leq i <j \leq n\}$. %The diagonal is ignored in
%determining $G(A)$ if $A$ is  symmetric, while the diagonal must be
%$0$ if $A$ is skew-symmetric.

The classic minimum rank problem involving symmetric matrices has
been studied extensively, see, e.g.,~\cite{FH}.

The minimum skew rank problem involves skew symmetric matrices  and
its study began recently in \cite{AB}. 
%see \cite {LMD,DKT} for more results. 
If the characteristic of $\mathbb{F}$ is $2$, then a
skew-symmetric matrix over $\mathbb{F}$ is also symmetric. %and may
%have nonzero diagonal entries.
Thus it is assumed throughout this
paper that the characteristic of $\mathbb{F}$ is not $2$.

For a field $\mathbb{F}$ and a graph $G$, let
$S^{-}(\mathbb{F},G)=\{A\in \mathbb{F}^{n\times n}: A^{T}=-A,
G(A)=G\}$ be the set of skew-symmetric matrices over $\mathbb{F}$
described by $G$. The minimum skew rank of $G$ over $\mathbb{F}$ is
defined as
\[
mr^{-}(\mathbb{F},G) = \min\{\text{rank}(A): A\in
S^{-}(\mathbb{F},G)\}.
\] The
corresponding maximum skew nullity of $G$ is defined as
\[ M^{-}(\mathbb{F},G)
= \max\{\text{nullity}(A): A\in S^{-}(\mathbb{F},G)\}.
\]
Obviously, $mr^{-}(\mathbb{F},G)+M^{-}(\mathbb{F},G)=|V_G|$.

Let $K_n$ be the complete graph with $n$ vertices, and $K_{n_1,n_2,
\dots, n_t}$ the complete $t$-partite graph with $n_i$ vertices in
the $i$th partite sets for $i=1, 2, \dots, t$.

Note that the rank of a skew-symmetric matrix over $\mathbb{F}$ is
always even. Thus $mr^{-}(\mathbb{F},G)$ is even for any field
$\mathbb{F}$ and any graph $G$. As observed in \cite{AB},
$mr^{-}(\mathbb{F},G)=0$ if and only if $G$ is an empty graph, and
if $\mathbb{F}$ is infinite and $G$ is a connected graph with at
least two vertices, then $mr^{-}(\mathbb{F},G)=2$ if and only if
 $G$ is a complete multipartite graph $K_{n_{1},n_{2},\ldots , n_{t}}$ for some $t\geq 2$,
$n_{i}\geq 1$ for $i=1,\ldots, t$. The authors \cite{AB} posed an
open question (Question 5.2) to characterize the graphs $G$ such
that $mr^{-}(\mathbb{F},G)=4$. We characterize the graphs $G$ with cut vertices over
the infinite field $\mathbb{F}$ such that $mr^{-}(\mathbb{F},G)=4$.

The class of $k$-trees is defined recursively as follows \cite{Ro}:
(i) The complete graph $K_{k+1}$ is a $k$-tree; (ii) A $k$-tree $G$
with $n+1$ vertices ($n\ge k+1$) can be constructed from a $k$-tree
$H$ on $n$ vertices by adding a vertex adjacent to all vertices of a
$k$-clique of $H$.
%
%A $k$-tree is a graph that can be built up from a complete graph $K_{k+1}$ on $k+1$ vertices by
%adding one vertex at a time adjacent to exactly the vertices in an
%existing $K_{k}$.
%A graph is a partial $k$-tree if it is a subgraph of a $k$-tree.
 A $k$-path is a $k$-tree which
is either $K_{k+1}$ or has exactly two vertices of degree $k$. We determine
the minimum skew rank of $k$-paths over a field $\mathbb{F}$. The $k$-th power $G^{k}$ of a graph $G$ is the graph
whose vertex set is $V_G$, two distinct vertices being adjacent in
$G^{k}$ if and only if their distance in $G$ is at most $k$. Let
$P_n=v_1v_2\dots v_n$ be the path on $n$ vertices.
If 
$k\le n-1$, then  $P_{n}^{k}$ is a
$k$-path (see below). As a corollary, we obtain the minimum skew rank
of the $k$-th power of a path  over the real field $\mathbb{R}$, which was already given in \cite{DKT}.

%
%For positive integer $k$, the $k$-th power of a graph $G$, denoted
%by $G^{k}$, is the graph with vertex set $V_G$ such that $uv\in
%E_{G^{k}}$  if and only if distance between $u$ and $v$ in $G$ is at
%most $k$.
% DeAlba et al.~\cite{DKT} determined

The maximum skew rank $MR^{-}(\mathbb{F},G)$ of a graph $G$ over a
field $\mathbb{F}$ is defined as
\[
MR^{-}(\mathbb{F},G) = \max\{\text{rank}(A): A\in
S^{-}(\mathbb{F},G)\}.
\]
Let $match(G)$ be the matching number of  $G$. It was shown
in \cite{AB} that
$mr^{-}(\mathbb{F},G)=2match(G)=MR^{-}(\mathbb{F},G)$ for a tree (a connected graph with no cycles) $G$
and a field $\mathbb{F}$. We extend this by showing that the above conclusion holds also for 
a
connected graph $G$ with no even cycles.

%
%In this paper, we characterize the graphs $G$ with cut vertices over
%the infinite field $\mathbb{F}$ such that $mr^{-}(\mathbb{F},G)=4$, determine
%the minimum skew rank of $k$-paths over a field $\mathbb{F}$, and show that
%$mr^{-}(\mathbb{F},G)=2match(G)=MR^{-}(\mathbb{F},G)$ for a
%connected graph $G$ with no even cycles and a field $\mathbb{F}$.

%%%%%%%%%%%%%%%%%%%%%%%%%%%%%%%%%%%%%%%
\section{Preliminaries}

Let $G$ be a graph. For $v\in V_G$, $G-v$ denotes the graph obtained
from $G$ by deleting vertex $v$ (and all edges incident with $v$).
 For $X\subseteq V_G$, $G[X]$ denotes the
subgraph of $G$ induced by vertices in $X$.

We give some lemmas that we will use in our proof.

\begin{Lemma} \label{lm2.5}
\cite{AB} Let $G$ be a connected graph with at least two vertices
and let $\mathbb{F}$ be an infinite field.
 Then $mr^{-}(\mathbb{F},G)=2$ if and only if
 $G$ is a complete multipartite graph.% = K_{n_{1},n_{2},\ldots , n_{t}}$ for some $t\geq 2$,
%$n_{i}\geq 1$, $i=1,\ldots, t$.
\end{Lemma}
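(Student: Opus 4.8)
Here is the approach I would take to prove Lemma~\ref{lm2.5} (which is due to \cite{AB}). The engine of the whole argument is the elementary structural fact that, over a field of characteristic not $2$, a skew-symmetric $n\times n$ matrix has rank at most $2$ if and only if it can be written $A=uv^{T}-vu^{T}$ for some $u,v\in\mathbb F^{n}$; in that case its $(i,j)$-entry is the $2\times2$ minor $u_iv_j-u_jv_i$ of the $n\times2$ matrix $[\,u\mid v\,]$ built from rows $i$ and $j$, so the edges of $G(A)$ are exactly the pairs of indices whose corresponding rows of $[\,u\mid v\,]$ are linearly independent. I would establish this decomposition first: the bilinear form $x^{T}Ay$ has radical $\ker A$, hence induces a nondegenerate alternating form on the $2$-dimensional quotient, which is a hyperbolic plane, and unwinding this on the column space produces the claimed $u,v$. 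I expect this to be the only step that is not pure bookkeeping.

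For the \emph{if} direction, let $G=K_{n_{1},\dots,n_{t}}$ with $t\ge2$. Since $\mathbb F$ is infinite, I may choose $t$ pairwise non-proportional nonzero vectors in $\mathbb F^{2}$ and assign one of them to every vertex of the corresponding part, obtaining a matrix $[\,u\mid v\,]$. Rows coming from a common part are proportional, so the matching entry of $A=uv^{T}-vu^{T}$ vanishes, while rows from distinct parts are linearly independent, so that entry is nonzero. Thus $G(A)=G$ and $\text{rank}(A)\le2$; since $G$ has an edge we get $A\neq0$, and a skew-symmetric matrix has even rank, so $\text{rank}(A)=2$ and $mr^{-}(\mathbb F,G)=2$.

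Conversely, given $A\in S^{-}(\mathbb F,G)$ with $\text{rank}(A)=2$, write $A=uv^{T}-vu^{T}$ and let $r_i$ denote the $i$-th row of $[\,u\mid v\,]$. No $r_i$ can be the zero vector, for then vertex $i$ would be isolated, contradicting that $G$ is connected on at least two vertices. Because the $r_i$ are all nonzero vectors in the plane, proportionality of $r_i$ and $r_j$ is an equivalence relation on $V_G$; within a class the relevant minors vanish, while every cross-class pair yields a nonzero minor, so $G$ is the complete multipartite graph whose parts are these classes, and there are at least two classes since a connected graph on $\ge2$ vertices has an edge.

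The only obstacle worth flagging is the rank-$2$ decomposition lemma; once it is in hand, the rest is just the translation between $2\times2$ minors and adjacency together with the transitivity of parallelism among nonzero plane vectors. I would also remark, for context, that infiniteness of $\mathbb F$ is used only in the \emph{if} direction, to supply arbitrarily many non-proportional directions in $\mathbb F^{2}$; over a finite field the statement would weaken to: $mr^{-}(\mathbb F,G)=2$ if and only if $G$ is a complete $t$-partite graph with $t\le|\mathbb F|+1$.
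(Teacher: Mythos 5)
The paper gives no proof of this lemma; it is quoted verbatim from \cite{AB}, so there is no in-paper argument to compare against. Your proof is correct and is essentially the standard one: the decomposition $A=uv^{T}-vu^{T}$ for a rank-$2$ skew-symmetric matrix, the identification of adjacency in $G(A)$ with linear independence of the corresponding rows of $[\,u\mid v\,]$, and the observation that proportionality of nonzero vectors in $\mathbb{F}^{2}$ is an equivalence relation whose classes are the parts. Your closing remarks are also accurate: infiniteness of $\mathbb{F}$ enters only in supplying $t$ pairwise non-proportional directions, and over $\mathbb{F}_q$ the bound $t\le q+1$ is exactly the number of lines through the origin in $\mathbb{F}_q^{2}$.
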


For a field $\mathbb{F}$  and a graph $G$ with $v\in V_G$, let
$r_{v}^{-}(\mathbb{F},
G)=mr^{-}(\mathbb{F},G)-mr^{-}(\mathbb{F},G-v)$.

The union of graphs $G_i$, $i=1,2, \dots,h$, denoted by
$\cup_{i=1}^{h}G_i$, is the graph with vertex set
$\cup_{i=1}^{h}V_{G_i}$ and edge set $\cup_{i=1}^{h}E_{G_i}$.

\begin{Lemma} \cite{AB,De}
 \label{lm2.7}
 Let $G$ be a graph with cut vertex $v$ and  $\mathbb{F}$  a field, where
$G=\cup_{i=1}^{h}G_i$ and $\cap_{i=1}^{h}V_{G_i}=\{v\}$. Then
$mr^{-}(\mathbb{F},G)=\sum_{i=1}^{h}mr^{-}(\mathbb{F},G_{i}-v)+ \min
\{\sum_{i=1}^{h}r_{v}^{-}(\mathbb{F},G_{i}),2\}$.
\end{Lemma}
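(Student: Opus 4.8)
This is the skew-symmetric analogue of the classical cut-vertex rank formula for symmetric matrices, and I would prove it along the same lines. By definition $mr^{-}(\mathbb{F},G)=mr^{-}(\mathbb{F},G-v)+r_v^{-}(\mathbb{F},G)$, and in the standard cut-vertex decomposition the sets $V_{G_i}\setminus\{v\}$ are pairwise disjoint, so $G-v$ is their disjoint union; hence a skew-symmetric matrix describing $G-v$ is block diagonal along that partition and $mr^{-}(\mathbb{F},G-v)=\sum_{i}mr^{-}(\mathbb{F},G_i-v)$. Thus the assertion is equivalent to $r_v^{-}(\mathbb{F},G)=\min\{\sum_i r_v^{-}(\mathbb{F},G_i),2\}$. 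I would first record that $r_u^{-}(\mathbb{F},H)\in\{0,2\}$ for any graph $H$ and any $u\in V_H$: it is $\ge 0$ since deleting a row and column cannot raise the rank, $\le 2$ since restoring a row and column raises it by at most $2$, and it is even because skew ranks are even. Writing $A(\hat v)$ for the principal submatrix of $A$ obtained by deleting the row and column of $v$, the key local tool is: for a skew-symmetric $A=\begin{pmatrix}0&b^{T}\\-b&C\end{pmatrix}$ one has $\operatorname{rank}(A)=\operatorname{rank}(C)$ when $b\in\operatorname{Col}(C)$ and $\operatorname{rank}(A)=\operatorname{rank}(C)+2$ otherwise. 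The first case follows by conjugating $A$ by $\begin{pmatrix}1&w^{T}\\0&I\end{pmatrix}$ with $Cw=b$, which yields $\begin{pmatrix}0&0\\0&C\end{pmatrix}$ (the top-left entry being $w^{T}Cw=0$, using the standing assumption $\operatorname{char}\mathbb{F}\neq2$); the second case follows since $\operatorname{rank}(A)$ is even and strictly larger than $\operatorname{rank}(C)$. Taking $C=A(\hat v)$, this says $\operatorname{rank}(A)=\operatorname{rank}(A(\hat v))$ exactly when the off-$v$ part of the $v$-column of $A$ lies in $\operatorname{Col}(A(\hat v))$.

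For the lower bound, fix $A\in S^{-}(\mathbb{F},G)$, order the vertices with $v$ first, and write $A(\hat v)=\bigoplus_i C_i$ with $C_i=A[V_{G_i}\setminus\{v\}]\in S^{-}(\mathbb{F},G_i-v)$, the off-$v$ part of the $v$-column being $b=(b_1,\dots,b_h)$ and $A[V_{G_i}]=\begin{pmatrix}0&b_i^{T}\\-b_i&C_i\end{pmatrix}\in S^{-}(\mathbb{F},G_i)$. If $b\notin\operatorname{Col}(A(\hat v))$, then $\operatorname{rank}(A)=\sum_i\operatorname{rank}(C_i)+2\ge mr^{-}(\mathbb{F},G-v)+2$, which is at least the target value. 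If $b\in\operatorname{Col}(A(\hat v))$, then, the column space of a block-diagonal matrix being the direct sum of the blocks' column spaces, $b_i\in\operatorname{Col}(C_i)$ for every $i$, so the tool applied inside $G_i$ gives $\operatorname{rank}(C_i)=\operatorname{rank}(A[V_{G_i}])\ge mr^{-}(\mathbb{F},G_i)=mr^{-}(\mathbb{F},G_i-v)+r_v^{-}(\mathbb{F},G_i)$; summing, $\operatorname{rank}(A)=\sum_i\operatorname{rank}(C_i)\ge mr^{-}(\mathbb{F},G-v)+\sum_i r_v^{-}(\mathbb{F},G_i)$, again at least the target.

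For the matching upper bound I would build one $A\in S^{-}(\mathbb{F},G)$ by gluing blocks along $v$. If $\sum_i r_v^{-}(\mathbb{F},G_i)=0$, pick rank-minimal $B_i\in S^{-}(\mathbb{F},G_i)$; then $C_i:=B_i(\hat v)$ has rank $mr^{-}(\mathbb{F},G_i-v)$ and, by the tool, the $v$-column $b_i$ of $B_i$ lies in $\operatorname{Col}(C_i)$. Using the $C_i$ as diagonal blocks and the $b_i$ as the $v$-column gives $A\in S^{-}(\mathbb{F},G)$ with $b\in\operatorname{Col}(A(\hat v))$, so $\operatorname{rank}(A)=mr^{-}(\mathbb{F},G-v)$. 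If $\sum_i r_v^{-}(\mathbb{F},G_i)\ge 2$, fix $j$ with $r_v^{-}(\mathbb{F},G_j)=2$ and choose any rank-$mr^{-}(\mathbb{F},G_i-v)$ matrix $C_i\in S^{-}(\mathbb{F},G_i-v)$ for every $i$; for that $C_j$, not every admissible $v$-column $b_j$ (one nonzero exactly on $N(v)\cap V_{G_j}$, which is nonempty since $r_v^{-}(\mathbb{F},G_j)=2$) can lie in $\operatorname{Col}(C_j)$, since otherwise $\begin{pmatrix}0&b_j^{T}\\-b_j&C_j\end{pmatrix}\in S^{-}(\mathbb{F},G_j)$ would have rank $mr^{-}(\mathbb{F},G_j-v)<mr^{-}(\mathbb{F},G_j)$; so pick an admissible $b_j\notin\operatorname{Col}(C_j)$ and arbitrary admissible $b_i$ for $i\neq j$. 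Gluing gives $A\in S^{-}(\mathbb{F},G)$ with $b\notin\operatorname{Col}(A(\hat v))$, so $\operatorname{rank}(A)=mr^{-}(\mathbb{F},G-v)+2$. In both subcases $mr^{-}(\mathbb{F},G)\le mr^{-}(\mathbb{F},G-v)+\min\{\sum_i r_v^{-}(\mathbb{F},G_i),2\}$, which together with the lower bound gives the formula.

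The main obstacle is the upper bound when $\sum_i r_v^{-}(\mathbb{F},G_i)\ge 2$: one must know that some rank-minimal block for $G_j-v$ can be extended ``badly'' at $v$, i.e.\ with the new column outside the column space, and this is precisely where the hypothesis $r_v^{-}(\mathbb{F},G_j)=2$ is consumed -- it cannot be produced by an explicit matrix but only by contradiction against the minimality defining $mr^{-}(\mathbb{F},G_j)$. A secondary point that must be checked carefully in every gluing step is that the resulting matrix genuinely belongs to $S^{-}(\mathbb{F},G)$: the off-diagonal blocks between distinct $G_i-v$ must be zero (no such edges of $G$ exist) and each $v$-column must have support exactly $N(v)\cap V_{G_i}$ inside $V_{G_i}\setminus\{v\}$.
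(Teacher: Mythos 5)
The paper does not prove this lemma; it is quoted from \cite{AB,De}, so there is no in-paper argument to compare against. Your proof is correct and is essentially the standard rank-spread/cut-vertex argument from those references: reduce to showing $r_v^{-}(\mathbb{F},G)=\min\{\sum_i r_v^{-}(\mathbb{F},G_i),2\}$ via the bordering criterion (rank increases by $0$ or $2$ according to whether the $v$-column lies in the column space of the principal submatrix, the congruence trick using $w^TCw=0$ in characteristic $\neq 2$), and you correctly identify and handle the one delicate point, namely producing a ``bad'' extension at $v$ when some $r_v^{-}(\mathbb{F},G_j)=2$.
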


\begin{Lemma} \label{lm2.1} \cite{AB}
Let $G$ be a graph and let $\mathbb{F}$ be an infinite field. If $G
=G_1\cup G_2$, then $mr^{-}(\mathbb{F},G) \leq
mr^{-}(\mathbb{F},G_1)+mr^{-}(\mathbb{F},G_2)$.
\end{Lemma}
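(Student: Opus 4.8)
The plan is to prove this subadditivity bound by building an explicit skew-symmetric matrix for $G$ out of optimal ones for $G_1$ and $G_2$. First I would choose $A_1\in S^{-}(\mathbb{F},G_1)$ with $\text{rank}(A_1)=mr^{-}(\mathbb{F},G_1)$ and $A_2\in S^{-}(\mathbb{F},G_2)$ with $\text{rank}(A_2)=mr^{-}(\mathbb{F},G_2)$. Indexing rows and columns by $V_G=V_{G_1}\cup V_{G_2}$, I would enlarge each $A_i$ to a $|V_G|\times|V_G|$ skew-symmetric matrix $\widetilde{A}_i$ by inserting zero rows and columns in the positions corresponding to $V_G\setminus V_{G_i}$; this does not change the rank. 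I would then examine the pencil $A=\widetilde{A}_1+c\,\widetilde{A}_2$ with a scalar $c\in\mathbb{F}$ to be chosen, which is automatically skew-symmetric since it is a sum of skew-symmetric matrices.

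The next step is to verify that $c$ can be chosen so that $G(A)=G$. For any non-edge $ij$ of $G$ one has $(\widetilde{A}_1)_{ij}=(\widetilde{A}_2)_{ij}=0$ --- either one of the two vertices lies outside $V_{G_i}$, or $ij$ is a non-edge of $G_i$ --- so $(A)_{ij}=0$ for every $c$, and in particular $A$ has no edges outside $G$. For an edge $ij$ of $G$ lying in exactly one of $E_{G_1},E_{G_2}$, the entry $(A)_{ij}$ equals $(A_1)_{ij}$ or $c\,(A_2)_{ij}$, which is nonzero as long as $c\neq 0$. The only delicate case is an edge $ij\in E_{G_1}\cap E_{G_2}$, where $(A)_{ij}=(A_1)_{ij}+c\,(A_2)_{ij}$ can vanish for exactly one value of $c$. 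There are only finitely many such forbidden values, together with $c=0$, so since $\mathbb{F}$ is infinite I can pick $c$ avoiding all of them, and then $A\in S^{-}(\mathbb{F},G)$.

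Finally, using the general subadditivity of matrix rank,
\[
mr^{-}(\mathbb{F},G)\le \text{rank}(A)\le \text{rank}(\widetilde{A}_1)+\text{rank}(\widetilde{A}_2)=\text{rank}(A_1)+\text{rank}(A_2)=mr^{-}(\mathbb{F},G_1)+mr^{-}(\mathbb{F},G_2),
\]
which is the claim. The only genuine obstacle is the possible cancellation on edges common to $G_1$ and $G_2$; this is exactly what the genericity of the scalar $c$ --- and hence the hypothesis that $\mathbb{F}$ is infinite --- is there to resolve, while everything else is routine bookkeeping about which matrix entries vanish.
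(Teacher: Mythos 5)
Your argument is correct and is essentially the standard proof of this subadditivity bound: pad the two optimal matrices with zero rows and columns, take the combination $\widetilde{A}_1+c\,\widetilde{A}_2$, and use the infinitude of $\mathbb{F}$ to choose $c$ avoiding the finitely many values that would cancel an entry on a common edge. The paper itself gives no proof, quoting the lemma from the reference [AB], and your construction is the same one used there, so there is nothing to add.
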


Let $G$ be a graph. A subset $Z\subset V_G$ defines an initial
coloring by coloring all vertices in $Z$ black and all the vertices
outside $Z$ white. The color change rule says: If a black vertex $u$
has exactly one white neighbor $v$, then change the color of $v$ to
black. In this case we write $u\rightarrow v$. The derived set of an
initial coloring $Z$ is the set of vertices colored black until no
more changes are possible. A zero forcing set is a subset $Z\subset
V_G$ such that the derived set of $Z$ is $V_G$. The zero forcing
number of $G$, denoted by $Z(G)$, is the minimum size of a zero
forcing set of $G$.

\begin{Lemma} \label{lm2.4}
\cite{AB} Let $G$ be a graph  and
$\mathbb{F}$  a field. Then  $M^{-}(\mathbb{F},G)\leq Z(G)$.
\end{Lemma}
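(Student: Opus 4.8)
The plan is to adapt the now-standard argument that bounds the maximum nullity of a matrix realizing $G$ by the zero forcing number $Z(G)$; in the skew-symmetric setting this is actually a touch cleaner than in the symmetric case, because over a field of characteristic $\neq 2$ a skew-symmetric matrix has zero diagonal, so each row equation only involves the neighbors of the corresponding vertex.

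First I would fix an arbitrary $A\in S^{-}(\mathbb{F},G)$, let $Z$ be any zero forcing set of $G$, and identify $V_G$ with $\{1,\dots,n\}$ where $n=|V_G|$. The heart of the proof is the claim: if $x\in\mathbb{F}^{n}$ satisfies $Ax=0$ and $x_i=0$ for every $i\in Z$, then $x=0$. To prove it, fix a sequence of forces $u_1\to w_1,\, u_2\to w_2,\dots$ that drives the initial coloring $Z$ to all of $V_G$ (one exists since $Z$ is a zero forcing set), and induct on the step index to show that $x$ vanishes on the current black set. For the inductive step, suppose $u\to w$, so $u$ is black, $w$ is its unique white neighbor, and by hypothesis $x_j=0$ for every black vertex $j$, in particular for $u$ and for every neighbor of $u$ other than $w$. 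Consider row $u$ of $Ax=0$: $\sum_j a_{uj}x_j=0$. Since $\mathrm{char}\,\mathbb{F}\neq 2$ and $A^T=-A$ we have $a_{uu}=0$, and $a_{uj}=0$ whenever $j$ is not adjacent to $u$ because $G(A)=G$; hence the equation reduces to $\sum_{j\sim u}a_{uj}x_j=0$, and every term with $j\neq w$ vanishes by the inductive hypothesis, leaving $a_{uw}x_w=0$. As $uw\in E_G=E_{G(A)}$, we have $a_{uw}\neq 0$, so $x_w=0$, completing the induction. When the forcing sequence terminates the black set is all of $V_G$, so $x=0$.

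The claim says precisely that the linear map $\ker A\to\mathbb{F}^{|Z|}$ given by restricting coordinates to $Z$ is injective, so $\text{nullity}(A)\le |Z|$. Choosing $A$ to achieve $M^{-}(\mathbb{F},G)$ and $Z$ to be a minimum zero forcing set yields $M^{-}(\mathbb{F},G)\le Z(G)$.

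I do not expect a serious obstacle here: the argument is essentially the skew analogue of the symmetric proof. The only points needing care are the bookkeeping of the induction along a valid forcing sequence (making sure "current black set" is tracked consistently with the chosen sequence), and the easy but essential observation that skew-symmetry together with $\mathrm{char}\,\mathbb{F}\neq 2$ forces every diagonal entry of $A$ to vanish — this is what makes the $u$-th equation a relation among the $x$-values on neighbors of $u$ only.
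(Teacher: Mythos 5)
Your argument is correct and is the standard zero-forcing nullity bound; the paper itself states this lemma as a quoted result from the reference [AB] without reproducing a proof, and your induction along a forcing sequence is exactly the argument given there. (One small remark: the zero-diagonal observation, while true, is not actually needed, since $x_u=0$ already kills the term $a_{uu}x_u$ because $u$ is black at the time of the force.)
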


\begin{Lemma}\label{lm2.8} \cite{AB}
Let $G$ be a graph  and  $\mathbb{F}$  a
field. Then  $MR^{-}(\mathbb{F},G)=2match(G)$.
\end{Lemma}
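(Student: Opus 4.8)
The plan is to prove the two inequalities $MR^{-}(\mathbb{F},G)\le 2\,match(G)$ and $MR^{-}(\mathbb{F},G)\ge 2\,match(G)$ separately, using throughout the following standard fact about alternating forms: for a skew-symmetric matrix $A$ over a field of characteristic not $2$, $\text{rank}(A)$ equals the maximum order of a nonsingular principal submatrix, and this order is even. I would first record a short self-contained justification. Let $S$ index a maximal linearly independent set of rows of $A$, so $|S|=\text{rank}(A)$. Because $A^{T}=-A$, the $s$-th column of $A$ is the negative transpose of its $s$-th row, so the columns indexed by $S$ are linearly independent as well. If $A[S]$ were singular, a null combination of its columns, extended by zeros off $S$ to a vector $\hat d$, would satisfy $A\hat d=0$ (using that every row outside $S$ is a combination of the rows in $S$), hence would give a dependence among the full columns indexed by $S$, a contradiction. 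Thus $A[S]$ is a nonsingular principal submatrix of order $\text{rank}(A)$, and being skew-symmetric it has even order.

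For the upper bound, take $A\in S^{-}(\mathbb{F},G)$ and a nonsingular principal submatrix $A[S]$ of order $\text{rank}(A)=2k$. Since $\det A[S]=\mathrm{Pf}(A[S])^{2}$, the Pfaffian $\mathrm{Pf}(A[S])=\sum_{N}\mathrm{sgn}(N)\prod_{ij\in N}a_{ij}$, summed over the perfect matchings $N$ of the complete graph on $S$, is nonzero. Hence some such $N$ has $\prod_{ij\in N}a_{ij}\ne 0$, which forces $a_{ij}\ne 0$ and thus $ij\in E_{G}$ for every $ij\in N$; so $N$ is a matching of $G$ of size $k$ and $match(G)\ge k$. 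Therefore $\text{rank}(A)=2k\le 2\,match(G)$, and taking the maximum over $A$ yields $MR^{-}(\mathbb{F},G)\le 2\,match(G)$.

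For the lower bound I must exhibit one matrix $A\in S^{-}(\mathbb{F},G)$ with $\text{rank}(A)\ge 2\,match(G)$. Let $m=match(G)$, fix a maximum matching $M$, and let $U$ be the set of $2m$ vertices it covers, so that $M$ is a perfect matching of $G[U]$. I would regard the entries $a_{ij}$ for $ij\in E_{G[U]}$ as independent indeterminates and view $\mathrm{Pf}(A[U])$ as a homogeneous multilinear polynomial of degree $m$ in which the monomial $\prod_{ij\in M}a_{ij}$ appears with coefficient $\mathrm{sgn}(M)=\pm 1\ne 0$. Since the characteristic is not $2$, $|\mathbb{F}|\ge 3$ and so $|\mathbb{F}\setminus\{0\}|\ge 2>1$; the Combinatorial Nullstellensatz then provides values $a_{ij}\in\mathbb{F}\setminus\{0\}$ for $ij\in E_{G[U]}$ with $\mathrm{Pf}(A[U])\ne 0$. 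Assigning arbitrary nonzero values to the remaining edges (those incident with an unmatched vertex) gives $A\in S^{-}(\mathbb{F},G)$ whose principal submatrix $A[U]$ is nonsingular, whence $\text{rank}(A)\ge|U|=2m$. Combining the two inequalities gives $MR^{-}(\mathbb{F},G)=2\,match(G)$.

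The main obstacle is the lower bound over an arbitrary field. Over an infinite field one could simply specialize the indeterminates generically, since the product of $\mathrm{Pf}(A[U])$ with all the edge-entries is a nonzero polynomial whose nonvanishing locus is dense; but over a finite field this genericity argument fails, and one cannot pass to an extension without changing $\mathbb{F}$. The role of the Combinatorial Nullstellensatz is precisely to guarantee a nonvanishing specialization in which every edge-entry is simultaneously nonzero, and the numerical condition $|\mathbb{F}|\ge 3$ (equivalently, characteristic not $2$) is exactly what makes its hypothesis $|\mathbb{F}\setminus\{0\}|>1$ hold.
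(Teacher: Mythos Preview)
The paper does not supply its own proof of this lemma; it is quoted from \cite{AB} as a known result and later invoked in the proof of Theorem~\ref{add}. There is therefore nothing in the present paper to compare your argument against.

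That said, your argument is correct and self-contained. The upper bound via the Pfaffian expansion of a nonsingular principal submatrix of order $\mathrm{rank}(A)$ is the standard route, and your justification that such a submatrix exists is sound. For the lower bound, your use of the Combinatorial Nullstellensatz is a clean way to make the construction work over every field of characteristic not $2$: the Pfaffian of $A[U]$ is multilinear in the edge variables, the monomial coming from the fixed maximum matching $M$ has top total degree with coefficient $\pm 1$, and $|\mathbb{F}\setminus\{0\}|\ge 2$ is exactly the hypothesis needed to guarantee a specialization with $\mathrm{Pf}(A[U])\ne 0$ while keeping every edge-entry nonzero. Assigning arbitrary nonzero values on the remaining edges then yields $A\in S^{-}(\mathbb{F},G)$ with $\mathrm{rank}(A)\ge 2m$, completing the proof.
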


\begin{Lemma}\label{lm2.9} \cite{AB} Let $G$ be a graph  and  $\mathbb{F}$  a
field. If $H$ is an induced subgraph of $G$,
$mr^{-}(\mathbb{F},H)\leq $ $ mr^{-}(\mathbb{F},G)$.
\end{Lemma}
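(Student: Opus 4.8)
The plan is to produce, from an optimal skew-symmetric realization of $G$, a realization of $H$ of no larger rank, using the fact that passing to a principal submatrix can only decrease rank. First I would choose a matrix $A\in S^{-}(\mathbb{F},G)$ attaining the minimum, so that $\operatorname{rank}(A)=mr^{-}(\mathbb{F},G)$. Writing $V_H\subseteq V_G$ for the vertex set of the induced subgraph $H$, I would form the principal submatrix $B=A[V_H]$ obtained by retaining exactly the rows and columns indexed by $V_H$.

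The key step is to check that $B\in S^{-}(\mathbb{F},H)$. Skew-symmetry is inherited, since $B^{T}=(A[V_H])^{T}=A^{T}[V_H]=-A[V_H]=-B$. For the zero/nonzero pattern, note that for distinct $i,j\in V_H$ the $(i,j)$-entry of $B$ equals $a_{ij}$, which is nonzero precisely when $ij\in E_G$; because $H$ is an \emph{induced} subgraph, for $i,j\in V_H$ the condition $ij\in E_G$ is equivalent to $ij\in E_H$. Hence $G(B)=H$, so $B$ describes $H$. This is the single place where the inducedness hypothesis is essential: for a non-induced subgraph, $B$ could carry extra nonzero off-diagonal entries coming from edges of $G$ that are missing from $H$, and then $B$ would fail to lie in $S^{-}(\mathbb{F},H)$.

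Finally I would invoke the elementary linear-algebra fact that the rank of any submatrix of a matrix is at most the rank of the whole matrix, which gives $\operatorname{rank}(B)\le\operatorname{rank}(A)$. Chaining the inequalities yields
\[
mr^{-}(\mathbb{F},H)\le\operatorname{rank}(B)\le\operatorname{rank}(A)=mr^{-}(\mathbb{F},G),
\]
as required. I do not expect a genuine obstacle here; the proof is short, and the only points demanding care are the bookkeeping that $B$ lands in $S^{-}(\mathbb{F},H)$ (which rests on inducedness) and the standard rank-monotonicity under taking submatrices.
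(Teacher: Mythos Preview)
Your argument is correct and is exactly the standard proof of this monotonicity fact: restrict an optimal $A\in S^{-}(\mathbb{F},G)$ to the principal submatrix on $V_H$, verify that inducedness forces $G(A[V_H])=H$, and apply rank monotonicity under taking submatrices. The paper does not supply its own proof of this lemma; it is quoted from \cite{AB} as a known preliminary, so there is nothing further to compare against.
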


\begin{Lemma} \label{lm2.6}
\cite{AB} Let $G$ be a graph with a unique perfect matching and
$\mathbb{F}$ a field. Then $mr^{-}(\mathbb{F},G) =|V_G|$.
\end{Lemma}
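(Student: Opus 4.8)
The plan is to use the Pfaffian of a skew-symmetric matrix. Write $n = |V_G|$; since $G$ has a perfect matching, $n$ is even. It suffices to show that \emph{every} matrix $A \in S^{-}(\mathbb{F},G)$ is nonsingular, i.e.\ $\mathrm{rank}(A) = n$ for all such $A$, because then the minimum over $S^{-}(\mathbb{F},G)$ is forced to equal $n$, giving $mr^{-}(\mathbb{F},G) = |V_G|$.

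I would recall the classical fact that for an $n\times n$ skew-symmetric matrix $A$ over a field of characteristic $\neq 2$ one has $\det(A) = \mathrm{Pf}(A)^2$, where the Pfaffian is the signed sum
\[
\mathrm{Pf}(A) = \sum_{M} \varepsilon(M) \prod_{\{i,j\}\in M,\, i<j} a_{ij}
\]
taken over all perfect matchings $M$ of the complete graph on $\{1,\dots,n\}$, with each $\varepsilon(M)\in\{+1,-1\}$. The key observation is that a single term of this sum is nonzero exactly when $a_{ij}\neq 0$ for every pair $\{i,j\}\in M$; by the defining property of $S^{-}(\mathbb{F},G)$, this occurs precisely when $M$ is a perfect matching of $G$ itself.

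Now I would invoke the hypothesis that $G$ has a \emph{unique} perfect matching $M_0$. Every term indexed by $M\neq M_0$ then contains some factor $a_{ij}$ with $\{i,j\}\notin E_G$, hence vanishes, and the whole sum collapses to
\[
\mathrm{Pf}(A) = \varepsilon(M_0)\prod_{\{i,j\}\in M_0,\, i<j} a_{ij}.
\]
Since every factor here has $\{i,j\}\in M_0\subseteq E_G$ and is therefore nonzero, and since $\mathbb{F}$ is an integral domain, this product is nonzero. Thus $\mathrm{Pf}(A)\neq 0$, so $\det(A)=\mathrm{Pf}(A)^2\neq 0$ and $A$ is nonsingular. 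As this holds for an arbitrary $A\in S^{-}(\mathbb{F},G)$, we conclude $mr^{-}(\mathbb{F},G)=n=|V_G|$.

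The one point requiring care is the passage from the full determinant to a sum over matchings. A direct Leibniz expansion $\det(A)=\sum_{\sigma}\mathrm{sgn}(\sigma)\prod_i a_{i\sigma(i)}$ also involves permutations with cycles of length greater than $2$, and one must verify that these contributions either vanish (any fixed point meets the zero diagonal) or cancel in pairs (reversing an odd cycle flips the sign via skew-symmetry). This cancellation is exactly what the identity $\det(A)=\mathrm{Pf}(A)^2$ packages, so citing that identity is the cleanest route; the genuine content of the argument is simply that uniqueness of the perfect matching leaves a single surviving nonzero term, with no other term available to cancel it.
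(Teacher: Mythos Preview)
The paper does not supply its own proof of this lemma; it is quoted from \cite{AB} as a known result and used as a black box in the proof of Theorem~\ref{add}. So there is nothing in the present paper to compare your argument against.

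That said, your Pfaffian argument is correct and is exactly the standard proof of this fact (and is essentially the argument given in \cite{AB}). The logic is sound: over a field of characteristic not $2$, the identity $\det(A)=\mathrm{Pf}(A)^2$ holds; the Pfaffian expands as a signed sum over perfect matchings of $K_n$; the zero pattern of $A\in S^{-}(\mathbb{F},G)$ kills every term except those coming from perfect matchings of $G$; uniqueness leaves a single nonzero monomial, so $\det(A)\neq 0$. Your closing remark about why one may bypass the raw Leibniz expansion and go straight to the Pfaffian is also accurate.
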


\section{Results}
%The minimum skew rank of $k$-path and cycle with chords}

we first give a partial result to the question in \cite{AB} to
characterize graphs with $mr^{-}(\mathbb{F},G)=4$. We consider
graphs with cut vertices.

\begin{Theorem} Let $G$
be a graph with cut vertex $v$ and  $\mathbb{F}$ an infinite field.
Then $mr^{-}(\mathbb{F},G)=4$ if and only if one of the following
conditions holds:\\
$(i)$ $G =G_1\cup G_2$ and $V_{G_1}\cap V_{G_2}=\{v\}$, where $G_1$,
$G_2$ are complete multipartite graphs such that $G_{1}-v$,
$G_{2}-v$ are
nonempty, and\\
$(ii)$ $G-v$ consists of a nonempty complete multipartite component
and isolated vertices.
\end{Theorem}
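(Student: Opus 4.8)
The plan is to apply the cut-vertex formula (Lemma~\ref{lm2.7}) and convert the rank-$4$ condition into combinatorial statements about the components $G_i-v$ and the quantities $r_v^-(\mathbb{F},G_i)$. First I would set up the notation: write $G=\cup_{i=1}^h G_i$ with $\cap_{i=1}^h V_{G_i}=\{v\}$, where each $G_i$ is connected and has at least two vertices (so $r_v^-(\mathbb{F},G_i)\in\{0,1,2\}$, since deleting a single vertex drops the rank by at most $2$, and the rank is even so $r_v^-$ can in principle be any of $0,1,2$; in fact over an infinite field one shows $r_v^-(\mathbb{F},G_i)\in\{0,2\}$ unless $mr^-(\mathbb{F},G_i-v)$ is odd, which it is not, so actually $r_v^-\in\{0,2\}$, but I would be careful here and keep $1$ in play only if needed). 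The formula gives
\[
mr^-(\mathbb{F},G)=\sum_{i=1}^h mr^-(\mathbb{F},G_i-v)+\min\Big\{\sum_{i=1}^h r_v^-(\mathbb{F},G_i),\,2\Big\}.
\]
Since $mr^-(\mathbb{F},G_i-v)\geq 0$ and each summand is even, and the $\min$-term is $0$ or $2$, asking for the total to equal $4$ forces a short list of cases.

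For the forward direction I would enumerate: either (a) exactly one $G_i-v$ is nonempty and contributes $mr^-=4$ while the $\min$-term is $0$; or (b) exactly one $G_i-v$ is nonempty with $mr^-=2$ and the $\min$-term is $2$; or (c) exactly two of the $G_i-v$ are nonempty, each with $mr^-=2$ and the $\min$-term is $0$; or (d) two of the $G_i-v$ nonempty, one with $mr^-=2$ and one empty but with $r_v^-=2$ — but an empty $G_i-v$ means $G_i$ is an edge $K_2$ or a star-like complete multipartite graph, which I would analyze directly. Using Lemma~\ref{lm2.5}, "$mr^-(\mathbb{F},H)=2$ iff $H$ is complete multipartite with $\geq 2$ parts," I would identify when each $G_i-v$ has minimum skew rank $2$, i.e.\ is a nonempty complete multipartite graph, and when a whole $G_i$ is complete multipartite. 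Case (c), where two components $G_i-v$ are complete multipartite and $\sum r_v^-\leq$ something giving $\min=0$, should correspond to condition $(i)$ provided I also check that $G_1,G_2$ themselves are complete multipartite — this needs the observation that attaching a cut vertex $v$ to a complete multipartite $G_i-v$ in a way that keeps $r_v^-=0$ forces $G_i$ itself complete multipartite (essentially because $mr^-(\mathbb{F},G_i)=mr^-(\mathbb{F},G_i-v)=2$ and Lemma~\ref{lm2.5} applies to $G_i$). Cases (a), (b), (d) collapse to: $G-v$ has exactly one nonempty component which is complete multipartite, plus isolated vertices — which is condition $(ii)$ — once I verify that the isolated vertices (pendant-type pieces hanging at $v$) do not inflate the rank beyond $4$ and that $v$'s adjacency pattern is consistent; here the $\min\{\cdot,2\}$ cap is exactly what keeps the extra pendant edges from raising the rank.

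For the converse I would simply plug each of $(i)$ and $(ii)$ back into Lemma~\ref{lm2.7} (and use Lemma~\ref{lm2.1} and Lemma~\ref{lm2.9} to bound from above and below) and check $mr^-(\mathbb{F},G)=4$ directly: in case $(i)$, $\sum mr^-(\mathbb{F},G_i-v)=2+2=4$ and one checks the $\min$-term is $0$; in case $(ii)$, the nonempty complete multipartite component $C$ of $G-v$ gives $mr^-(\mathbb{F},C)=2$, the isolated vertices contribute $0$, and $r_v^-$ of the relevant block is $2$ (because adding $v$ adjacent to vertices in several blocks destroys the complete-multipartite structure, raising the rank to $4$), so the formula yields $2+2=4$. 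The main obstacle I anticipate is the bookkeeping in the forward direction when an entire component $G_i$ equals $K_2$ or more generally when $G_i-v$ is empty: these degenerate blocks interact with the $\min\{\cdot,2\}$ cap in a way that makes the case analysis branch, and one must be careful that conditions $(i)$ and $(ii)$ are genuinely exhaustive and mutually consistent (they need not be mutually exclusive). A secondary subtlety is justifying that $r_v^-(\mathbb{F},G_i)\in\{0,2\}$ rather than $1$, which follows from parity of skew rank but should be stated explicitly, since the whole case split relies on it.
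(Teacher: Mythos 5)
Your overall approach is the same as the paper's: apply the cut-vertex formula of Lemma~\ref{lm2.7} together with the rank-$2$ characterization of Lemma~\ref{lm2.5}, observe that each $r_v^{-}$ is even and nonnegative so the $\min$-term is $0$ or $2$, and enumerate how $\sum_i mr^{-}(\mathbb{F},G_i-v)+\min\{\cdot\}$ can equal $4$. The paper organizes the necessity direction by whether $G-v$ has isolated vertices, counting $p$ complete multipartite components, $q$ isolated vertices and $m$ other components and using $4\ge 2p+4m$ (resp.\ $4\ge 2p+4m+2$); your split according to the value of $\sum_i mr^{-}(\mathbb{F},G_i-v)$ amounts to the same bookkeeping, and your key step for condition $(i)$ --- that $r_v^{-}(\mathbb{F},G_i)=0$ forces $mr^{-}(\mathbb{F},G_i)=2$ and hence $G_i$ complete multipartite by Lemma~\ref{lm2.5} --- is exactly the paper's.

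Two points in your sketch need repair. First, your case (a) --- a single component of $G-v$ with minimum skew rank $4$ and $\min$-term $0$ --- does not ``collapse to condition (ii)'': such a component is not complete multipartite, so if case (a) could occur the theorem would be false. You must show it is vacuous: since $v$ is a cut vertex, $G-v$ has at least two components, so besides the rank-$4$ component there is at least one further component; if it is an isolated vertex its block is a $K_2$ with $r_v^{-}(\mathbb{F},K_2)=2$, forcing the $\min$-term to be $2$ and the total to be at least $6$, and if it is larger the first sum already exceeds $4$. Second, in the sufficiency of $(ii)$ your justification for the $\min$-term being $2$ is misplaced: it is not that attaching $v$ to the complete multipartite component raises that block's rank (it need not --- if $v$ is joined to all of that component the block is still complete multipartite, so it can have $r_v^{-}=0$); the $\min$-term equals $2$ because each isolated vertex of $G-v$ contributes a $K_2$ block with $r_v^{-}(\mathbb{F},K_2)=2$, and at least one such vertex exists because $v$ is a cut vertex. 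With these two fixes your argument coincides with the paper's proof.
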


\begin{proof}
Suppose first that  (i) holds. Note that $G_{i}-v$ is still a
complete multipartite graph for $i=1,2$. By Lemma~\ref{lm2.5},
$mr^{-}(\mathbb{F},G_{1})=mr^{-}(\mathbb{F},G_{2})=mr^{-}(\mathbb{F},G_{1}-v)=mr^{-}(\mathbb{F},G_{2}-v)=2$.
Then $r_{v}^{-}(\mathbb{F},G_{1})+r_{v}^{-}(\mathbb{F},G_{2})=0$.
Thus by Lemma~\ref{lm2.7},
$mr^{-}(\mathbb{F},G)=mr^{-}(\mathbb{F},G_{1}-v)+mr^{-}(\mathbb{F},G_{2}-v)+\min\{0,2\}=4$.

Now suppose that  (ii) holds. Let $W$ be the unique complete
multipartite component, and $a$ the number of isolated vertices in
$G-v$. By Lemma~\ref{lm2.5}, $mr^{-}(\mathbb{F},W)=2$. Note that
$r_v^{-}(\mathbb{F},K_{2})=2$. Then by Lemma~\ref{lm2.7},
$mr^{-}(\mathbb{F},G)=mr^{-}(\mathbb{F},W)+a\cdot
mr^{-}(\mathbb{F},K_{1})+2=4$.

Conversely, suppose that  $mr^{-}(\mathbb{F},G)=4$. Let $p$ be the
number of nonempty complete multipartite components, and $q$ the
number of isolated vertices in $G-v$. Let $m$ be the number of the
remaining components.

Note that the minimum skew rank of a graph that is neither a
complete multipartite graph nor an empty graph is larger than $4$.

\noindent {\bf Case 1.} $q=0$. By Lemma~\ref{lm2.7},
$4=mr^{-}(\mathbb{F},G)\geq 2p+4m$. If $m=1$, then $p=0$, a
contradiction to the fact that $v$ is a cut vertex of $G$. Thus
$m=0$, implying that $p=2$. Let $W_{1}$, $W_{2}$ be the vertex sets
of the two complete multipartite  components of $G-v$ and let
$G_{1}$, $G_{2}$ be the subgraph induced by $\{v\}\cup W_{1}$,
$\{v\}\cup W_{2}$. By Lemma~\ref{lm2.5},
$mr^{-}(\mathbb{F},G_{1}-v)=mr^{-}(\mathbb{F},G_{2}-v)=2$. By
Lemma~\ref{lm2.7},
$4=mr^{-}(\mathbb{F},G)=mr^{-}(\mathbb{F},G_{1}-v)+mr^{-}(\mathbb{F},G_{2}-v)+\min\{r_{v}^{-}(\mathbb{F},G_{1})+r_{v}^{-}(\mathbb{F},G_{2}),2\}=2+2+\min\{r_{v}^{-}(\mathbb{F},G_{1})+r_{v}^{-}(\mathbb{F},G_{2}),2\}$.
Then $r_{v}^{-}(\mathbb{F},G_{1})=r_{v}^{-}(\mathbb{F},G_{2})=0$.
Thus $mr^{-}(\mathbb{F},G_{1})=mr^{-}(\mathbb{F},G_{2})=2$.  By
Lemma~\ref{lm2.5}, $G_1$ and $G_2$ are complete multipartite graphs,
and then (i) follows.

\noindent {\bf Case 2.} $q\neq0$. Note that
$r_v^{-}(\mathbb{F},K_{2})=2$. By Lemma~\ref{lm2.7},
$4=mr^{-}(\mathbb{F},G)\geq 2p+4m+2$. Then $m=0$ and $p=1$, and thus
(ii) follows.
\end{proof}

Now we consider the minimum skew rank of $k$-paths.  Note that a
$k$-path with at least $k+2$ vertices has at least two vertices of
degree $k$ and any two vertices of degree $k$ are not adjacent. The
following lemma follows directly from the definition of $k$-path.

\begin{Lemma} \label{base} Let $G$ be a $k$-path with at least $k+2$
vertices, and $v$ a vertex of $G$ with degree $k$. Then $G-v$ is
also a $k$-path.
\end{Lemma}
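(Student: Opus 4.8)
The plan is to verify the two requirements in the definition of a $k$-path directly for $G-v$: that it is a $k$-tree, and that it is either $K_{k+1}$ or has exactly two vertices of degree $k$. Two standard facts about $k$-trees, both provable by following a construction sequence, will be the workhorses: (a) a vertex of a $k$-tree has degree $k$ if and only if it is simplicial, i.e.\ its neighbourhood is a $k$-clique (once a vertex has a clique neighbourhood it keeps one, and a degree-$k$ vertex acquires no new neighbours after it is introduced); and (b) deleting a simplicial vertex from a $k$-tree on at least $k+2$ vertices again yields a $k$-tree. Granting these, the skeleton is short: by (a) the vertex $v$ is simplicial, so $K:=N_G(v)$ is a $k$-clique; by (b), $G-v$ is a $k$-tree. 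If $|V_G|=k+2$, then $G-v$ is a $k$-tree on $k+1$ vertices, hence equals $K_{k+1}$, and we are done; so from now on assume $|V_G|\ge k+3$, so that $G-v$ is a $k$-tree on at least $k+2$ vertices.

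It then remains to show that $G-v$ has exactly two vertices of degree $k$. Let $w$ be the second degree-$k$ vertex of $G$; it exists and satisfies $w\not\sim v$ by the remark preceding the lemma, so $\deg_{G-v}(w)=k$, and (invoking the standard fact that a $k$-tree on at least $k+2$ vertices has at least two simplicial vertices) $G-v$ has at least two degree-$k$ vertices. For the upper bound, suppose $G-v$ had three distinct degree-$k$ vertices $a_1,a_2,a_3$. Each $a_i$ is simplicial in $G-v$ by (a), and $\deg_G(a_i)$ equals $k$ if $a_i\notin K$ and $k+1$ if $a_i\in K$; since $G$ has only two degree-$k$ vertices, at most one of the $a_i$ can lie outside $K$, so two of them, say $a_1,a_2$, lie in $K$. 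As $K$ is a clique in $G$, we get $a_1\sim a_2$ in $G$ and hence in $G-v$; but two adjacent simplicial vertices of a $k$-tree have the same closed neighbourhood (a one-line argument), so $Q:=N_{G-v}[a_1]=N_{G-v}[a_2]$ is a $(k+1)$-clique of $G-v$ containing the two distinct simplicial vertices $a_1,a_2$. This is impossible once $|V_{G-v}|>k+1$, which is the technical claim I would isolate: \emph{a $k$-tree that contains a $(k+1)$-clique with two distinct simplicial vertices must itself be $K_{k+1}$.} Since $|V_{G-v}|=|V_G|-1\ge k+2$, this gives the contradiction, so $G-v$ has exactly two degree-$k$ vertices and is a $k$-path.

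The main obstacle, and essentially the only step beyond bookkeeping, is the boxed claim about $(k+1)$-cliques with two simplicial vertices. I would prove it by induction on the number of vertices, using that a $k$-tree on more than $k+1$ vertices is obtained by attaching a simplicial (degree-$k$) vertex $z$ to a smaller $k$-tree, and examining where $z$ sits relative to $Q$ and to the two special vertices: in every case $z$ ends up either reducing one special vertex to degree $k-1$ in the smaller $k$-tree (impossible) or forcing a new neighbour outside $Q$ onto one of them (destroying simpliciality), so no such vertex can exist. This same claim is exactly what underpins the two "standard facts" used above — fact (b) (in its induction one must exclude the last added vertex being adjacent to $v$, i.e.\ two adjacent simplicial vertices) and the "at least two simplicial vertices" statement — while fact (a) is a one-line observation from a construction sequence and the degree count in the second paragraph is elementary.
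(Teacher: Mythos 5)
Your proof is correct. Note, however, that the paper offers no proof at all: it simply asserts that the lemma ``follows directly from the definition of $k$-path,'' so there is nothing to compare your argument against except that bare claim. Your write-up supplies the missing details, and the key steps all check out: fact (a) holds because a vertex of a $k$-tree acquires a $k$-clique neighbourhood when it is introduced and has degree $k$ precisely when it gains no further neighbours; fact (b) admits a quick induction (if the last-added vertex $z$ were adjacent to the simplicial vertex $v$, then $v$ would have degree $k-1$ in the $k$-tree $G-z$, which is impossible since every vertex of a $k$-tree on at least $k+1$ vertices has degree at least $k$ --- this is slightly more direct than routing through your boxed claim); and the boxed claim itself (a $k$-tree containing a $(k+1)$-clique with two simplicial vertices is $K_{k+1}$) goes through by the induction you describe, since deleting the last-added vertex $z$ either drops one of the two special vertices to degree $k-1$ (if $z$ lies in the clique or is one of them) or preserves the configuration in a smaller $k$-tree. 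The only stylistic remark is that your argument for ``at most two degree-$k$ vertices in $G-v$'' is really a proof that no two simplicial vertices of a $k$-tree on at least $k+2$ vertices are adjacent, which generalizes the paper's unproved remark that two degree-$k$ vertices of a $k$-path are never adjacent; isolating that statement would make the whole proof cleaner, but the logic as you have it is sound.
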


%\begin{proof} If $|V(G)|=k+2$, then it is obvious. Suppose that $|V(G)|\ge k+3$.
%Obviously, $G$ is a $k$-tree. Thus $G-v$ is a $k$-tree. Let $u$ be
%any neighbor of $v$ in $G$. Suppose that $d_G(u)\ge k+2$. Then
%$d_{G-v}(u)\ge k+1$. Note that $G-v$ has at least two vertices of
%degree $k$. Thus $G$ has at least three vertices of degree $k$, a
%contradiction. It follows that $v$ has at least one neighbor in $G$
%of degree $k+1$.  If $v$ has two neighbors of degree $k+1$ in $G$,
%then they become adjacent vertices of degree $k$ in $G-v$, which is
%impossible because $G-v$ is a $k$-tree. Thus $v$ has exactly one
%neighbor of degree $k+1$ in $G$. Then $G-v$ has exactly two vertices
%of degree $k$, and thus $G-v$ is a $k$-path.
%\end{proof}

Let $G$ be a $k$-path with $n\ge k+2$ vertices. By Lemma~\ref{base},
 the vertices of $G$ may be labeled as follows: Choose a vertex
of degree $k$, labeled as $v_n$, and label its unique neighbor of
degree $k+1$ in $G$ with $v_{n-1}$. Then $v_{n-1}$ is a vertex of
degree $k$ in the $k$-path  $G-v_n$. Repeating the process above, we
may label $n-{k+1}$ vertices of $G$ as $v_n$, $v_{n-1}, \dots,
v_{k+2}$. Obviously, $G-v_n-v_{n-1}-\cdots -v_{k+2}=K_{k+1}$ and it
contains a vertex of degree $k$ in $G$, which is labeled as $v_1$,
and the remaining vertices are labeled as $v_2$, $v_3, \dots,
v_{k+1}$ such that $v_2$ is the unique neighbor of $v_1$ with degree
$k+1$ in $G$. Note that in our labelling, $v_i$ is not adjacent to
$v_{j+1}, v_{j+2},\dots, v_n$ if $v_i$ is not adjacent to $v_j$ for
$j\ge \max\{i+1,k+2\}$.  Recall that a $k$-tree is a chordal graph. The above labeling is
actually the ``perfect elimination" labeling inherent to chordal graphs \cite{Sh}.% We
%use this labelling in the proof of the following theorem.

\begin{Theorem} \label{th1}
Let $G$ be a $k$-$path$ on $n$ vertices and  $\mathbb{F}$  an
infinite field. Then
\begin{equation*}
mr^{-}(\mathbb{F},G)=\begin{cases}n-k &\text{if $n-k$ is even},\\
                       n-k+1 &\text{if $n-k$ is odd}.
\end{cases}
\end{equation*}
\end{Theorem}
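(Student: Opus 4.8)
The plan is to prove the two bounds separately. For the upper bound, I would use the ``perfect elimination'' labeling $v_1, v_2, \dots, v_n$ described just above the theorem and proceed by induction on $n$, peeling off the degree-$k$ vertex $v_n$. When $n-k$ is even, the base case is $K_{k+1}$ with $n=k+1$, but since we need $n\geq k+2$ for the labeling, it is cleaner to take as base case the smallest $k$-path, handling $n=k+1$ and $n=k+2$ directly (note $mr^{-}(\mathbb{F},K_{k+1})=k$ or $k-1$ according to the parity of $k$, by Lemma~\ref{lm2.8} since $K_{k+1}$ has a perfect matching iff $k+1$ is even, combined with the fact that $mr^-$ is even). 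For the inductive step I would apply Lemma~\ref{lm2.1} writing $G = G' \cup H$ where $G'$ is the $k$-path $G - v_n$ and $H$ is the complete graph $K_{k+1}$ on $v_n$ together with its $k$ neighbors (a $k$-clique of $G'$); this gives $mr^{-}(\mathbb{F},G) \leq mr^{-}(\mathbb{F},G') + mr^{-}(\mathbb{F},K_{k+1})$, which is too weak by itself. So instead I would peel off \emph{two} vertices at a time, or better, use Lemma~\ref{lm2.9} together with a direct matrix construction: exhibit an explicit $A\in S^{-}(\mathbb{F},G)$ of the claimed rank. Concretely, order the vertices so that a maximum matching $\{v_1v_2, v_3v_4, \dots\}$ is visible, put a $2\times 2$ skew block $\begin{pmatrix}0 & 1\\ -1 & 0\end{pmatrix}$ on each matched pair, and fill in the remaining entries (forced to be nonzero by edges of $G$) generically; the chordal/$k$-path structure should guarantee that the rank does not exceed twice the number of matched pairs, i.e. $mr^-(\mathbb{F},G)\le 2\,match(G)$, and one computes $match(G) = \lceil (n-k)/2 \rceil + \lfloor k/2\rfloor$ — wait, I would instead directly verify $match(G)$ equals $\lfloor n/2 \rfloor$ when $n-k$ is even and the bound matches.

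For the lower bound, the cleanest route is via the zero forcing number and Lemma~\ref{lm2.4}: $mr^{-}(\mathbb{F},G) = |V_G| - M^{-}(\mathbb{F},G) \geq n - Z(G)$. So it suffices to show $Z(G) \leq k$ when $n-k$ is even and $Z(G)\leq k-1$ when $n-k$ is odd. For the former, I claim $\{v_1, v_2, \dots, v_k\}$ — or rather the $k$ vertices of an appropriate end $k$-clique — is a zero forcing set: because $v_{k+1}$ is adjacent to exactly the first $k$ vertices among $v_1,\dots,v_{k+1}$ and the labeling property says $v_i$ is not adjacent to $v_{j+1},\dots,v_n$ when $v_i\not\sim v_j$, one can force $v_{k+1}$, then $v_{k+2}$, and so on up the path, each newly blackened vertex having a unique white neighbor at the next stage. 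This handles the even case. For the odd case I need a sharper lower bound $mr^-(\mathbb{F},G)\ge n-k+1$; since $mr^-$ is always even and $n-k$ is odd, any even lower bound of at least $n-k-1$ forces the value up to $n-k+1$, so it suffices to show $Z(G)\le k$ here too (giving $mr^- \ge n-k$, hence $\ge n-k+1$ by parity) — but I must also rule out $mr^-=n-k-1$, which is automatic, and confirm the upper bound $n-k+1$ is actually attained, not $n-k-1$: this follows because $mr^-(\mathbb{F},K_{k+1})\ge k-1$ with equality, and more carefully from the matching count, $2\,match(G)$, via Lemma~\ref{lm2.8} applied to a suitable spanning subgraph or directly.

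The main obstacle I anticipate is pinning down the exact value rather than just the parity: showing the upper bound is achieved with equality in the odd case (i.e. $mr^- = n-k+1$ and not something smaller) and, symmetrically, that the zero forcing set of size $k$ (resp.\ $k-1$) I propose genuinely works for \emph{every} $k$-path, not just $P_n^k$. The subtlety is that a $k$-path can branch: at each step a new degree-$k$ vertex attaches to \emph{some} $k$-clique of the current $k$-path, and I must check the forcing argument is robust to which clique is chosen. I expect this to come down to a careful invariant: after processing $v_1,\dots,v_j$, the black set is exactly $\{v_1,\dots,v_j\}$ and $v_{j+1}$ is the unique white neighbor of whichever of its $k$ neighbors was most recently blackened — this is exactly what the ``$v_i \not\sim v_j \Rightarrow v_i\not\sim v_{j+1},\dots,v_n$'' property encodes, so the argument should go through. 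I would also cross-check the final formula against the known result for $P_n^k$ over $\mathbb{R}$ from \cite{DKT}, which the paper cites as a corollary, to make sure the parity bookkeeping is right.
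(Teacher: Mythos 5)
Your lower bound is essentially the paper's argument: $\{v_1,\dots,v_k\}$ is a zero forcing set (forcing $v_{k+1},v_{k+2},\dots$ in order, using the labeling property that $v_i\not\sim v_j$ implies $v_i\not\sim v_{j+1},\dots,v_n$), so $M^{-}(\mathbb{F},G)\le Z(G)\le k$ by Lemma~\ref{lm2.4}, and the evenness of the rank of a skew-symmetric matrix upgrades $mr^{-}\ge n-k$ to $mr^{-}\ge n-k+1$ in the odd case. Your aside about needing $Z(G)\le k-1$ when $n-k$ is odd is unnecessary, as you yourself note; and there is nothing to ``rule out'' or ``confirm attained'' on the lower-bound side once the matching upper bound is in hand.

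The upper bound, however, has a genuine gap in two places. First, the base case: $mr^{-}(\mathbb{F},K_{k+1})$ is not ``$k$ or $k-1$'' --- that quantity is $MR^{-}=2\,match$ (Lemma~\ref{lm2.8}), the \emph{maximum} skew rank. Since $K_{k+1}$ is complete multipartite and $\mathbb{F}$ is infinite, Lemma~\ref{lm2.5} gives $mr^{-}(\mathbb{F},K_{k+1})=2=n-k+1$, which is exactly what the theorem asserts at $n=k+1$. Second, the inductive step: you rightly observe that peeling off $v_n$ alone and applying Lemma~\ref{lm2.1} is too weak, but the alternative you lean on --- an explicit matrix of rank at most $2\,match(G)$ --- cannot work, because $2\,match(G)$ bounds the maximum rank and is far larger than $n-k$ (already for $K_{k+1}$ it is about $k$ versus the true value $2$). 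The missing idea is the paper's overlapping two-vertex peel at the \emph{low} end of the elimination order: write $G=G_1\cup G_2$ with $G_1=G[\{v_1,\dots,v_{k+2}\}]$ and $G_2=G[\{v_3,\dots,v_n\}]$. Here $G_1$ is the $k$-path on $k+2$ vertices, i.e.\ $K_{k+2}$ minus an edge, which is complete multipartite, so $mr^{-}(\mathbb{F},G_1)=2$ by Lemma~\ref{lm2.5}; and $G_2$ is a $k$-path on $n-2$ vertices, so Lemma~\ref{lm2.1} and induction give $mr^{-}(\mathbb{F},G)\le 2+mr^{-}(\mathbb{F},G_2)$, which drops $n$ by $2$ while adding only $2$ to the bound and closes the induction with base cases $n=k+1,k+2$ (both complete multipartite, hence $mr^{-}=2$). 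Without some such overlapping decomposition your upper bound does not go through.
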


\begin{proof}
Let $Z=\{v_{1},v_{2},\ldots , v_{k}\}$. Color all vertices in $Z$
black and all the vertices outside $Z$ white. We will show that $Z$
is a zero forcing set of $G$. Since all neighbors of $v_{1}$
different from $v_{k+1}$ are black, we have $v_{1}\rightarrow
v_{k+1}$. Note that $v_{2}$ is adjacent to $v_{k+2}$ but not
adjacent to $v_{k+3},v_{k+4},\ldots, v_{n}$.
 Since all neighbors of $v_{2}$
different from $v_{k+2}$ are black, we have $v_{2}\rightarrow
v_{k+2}$. Let $G_1=G[\{v_{1},v_{2},\ldots , v_{k+3}\}]$ and
$G_2=G[\{v_{1},v_{2},\ldots , v_{k+4}\}]$. If each neighbor of
$v_{k+3}$ in $G_1$ is adjacent to $v_{k+4}$ in $G$, then $v_{k+4}$
is of degree $k+1$ in $G_2$, a contradiction. Thus there is a
neighbor, say $w$, of $v_{k+3}$ in $G_1$ such that $wv_{k+4}\not\in
E_G$, and then $wv_i\not\in E_G$ for $i\ge k+5$, implying that
 $w\rightarrow v_{k+3}$.
 Repeating the process above, we may finally color all vertices of $G$ black. Thus $Z$ is
a zero forcing set of $G$.  By Lemma~\ref{lm2.4},
$M^{-}(\mathbb{F},G)\leq Z(G)\leq k$, and then
$mr^{-}(\mathbb{F},G)=n-M^{-}(\mathbb{F},G)\geq n-k$. Note that the
rank of a skew-symmetric matrix is even. It follows that
\begin{eqnarray*}
mr^{-}(\mathbb{F},G)\geq\begin{cases}n-k &\text{if $n-k$ is even},\\
                         n-k+1 &\text{if $n-k$ is odd}.
\end{cases}
\end{eqnarray*}

To prove the result, we need only to show
\begin{equation}\label{ee}
mr^{-}(\mathbb{F},G)\leq\begin{cases}n-k &\text{if $n-k$ is even},\\
                         n-k+1 &\text{if $n-k$ is odd}.
\end{cases}
\end{equation}
We prove this by induction on $n$. If $n=k+1$, then $G=K_{k+1}$,
which is a complete multipartite graph, and thus by
Lemma~\ref{lm2.5}, $mr^{-}(\mathbb{F},G)=2=n-k+1$. If $n=k+2$, then
$G=K_{k+2}-e$ is also a complete multipartite graph, where $e\in
E_{K_{k+2}}$, and thus by Lemma  \ref{lm2.5},
$mr^{-}(\mathbb{F},G)=2=n-k$. Thus (\ref{ee}) is true for $n=k+1,
k+2$. Suppose that $n\geq k+3$ and for a $k$-path $H$ on $m$
vertices with $k+1\leq m\leq n-1$, we have
\begin{equation*}
mr^{-}(\mathbb{F},H)\leq\begin{cases}m-k &\text{if $m-k$ is even},\\
                         m-k+1 &\text{if $m-k$ is odd}.
\end{cases}
\end{equation*}
Let $G$ be a $k$-path on $n$ vertices. Let
$$G_{1}=G[\{v_{1},v_{2},\ldots ,v_{k+2}\}] \mbox{ and }
G_{2}=G[\{v_{3},v_{4},\ldots ,v_{n}\}].$$ Then $G_{1}$ is a $k$-path
on $k+2$ vertices, and $G_{2}$ is a $k$-path on $n-2$ vertices.
Obviously, $mr^{-}(\mathbb{F},G_{1})=2$, and by the induction
hypothesis,
\begin{equation*}
mr^{-}(\mathbb{F},G_{2})\leq\begin{cases}n-k-2 &\text{if $n-k-2$ is even},\\
                         n-k-1 &\text{if $n-k-2$ is odd},
\end{cases}
\end{equation*}
i.e.,
\begin{equation*}
mr^{-}(\mathbb{F},G_{2})\leq\begin{cases}n-k-2 &\text{if $n-k$ is even},\\
                         n-k-1 &\text{if $n-k$ is odd}.
\end{cases}
\end{equation*}
Note that  $G=G_{1}\cup  G_{2}$. By Lemma~\ref{lm2.1},
\begin{eqnarray*}
mr^{-}(\mathbb{F},G)&\le& mr^{-}(\mathbb{F},G_{1})+ mr^{-}(\mathbb{F},G_{2})\\
         &\leq& 2+\begin{cases}n-k-2 &\text{if $n-k$ is even}\\
                         n-k-1 &\text{if $n-k$ is odd}
                         \end{cases}\\
         &=&\begin{cases}n-k &\text{if $n-k$ is even},\\
                         n-k+1 &\text{if $n-k$ is odd}.
                         \end{cases}
\end{eqnarray*}
This proves (\ref{ee}).
\end{proof}

%Let $\mathbb{R}$ be the real field.

%Recall that the $k$-th power $G^{k}$ of a graph $G$ is the graph
%whose vertex set is $V_G$, two distinct vertices being adjacent in
%$G^{k}$ if and only if their distance in $G$ is at most $k$. Let
%$P_n=v_1v_2\dots v_n$ be the path on $n$ vertices.
Obviously, $P_n^k$ is a complete graph if $k\ge n$. Suppose that
$k\le n-1$. Obviously, $P_n^k[\{v_1,v_2, \dots, v_{k+1}\}]=K_{k+1}$,
and if $k\le n-2$, then for $j=2,3,\dots,n-k$,
$P_n^k[\{v_{j},v_{j+1}, \dots, v_{k+j-1}\}]=K_{k}$, and $v_{k+j}$ is
adjacent to $v_{j},v_{j+1}, \dots, v_{k+j-1}$. Thus $P_{n}^{k}$ is a
$k$-path. Now by Lemma~\ref{lm2.5} and Theorem
\ref{th1} we have the following result, which was proved in \cite{DKT} when $\mathbb{F}$ is the real field $\mathbb{R}$.

\begin{Corollary} Let $\mathbb{F}$  be an
infinite field. Then
 % and let $P_{n}^{k}$ be the $k$-th power of $P_{n}$.
\begin{eqnarray*}
mr^{-}(\mathbb{F},P_{n}^{k})=\begin{cases}n-k &\text{if $1\le k\le n-1$ and $n-k$ is even},\\
                         n-k+1 &\text{if $1\le k\le n-1$ and $n-k$ is
                         odd},\\
                         2 &\text{if $k\ge n$}.

\end{cases}
\end{eqnarray*}
\end{Corollary}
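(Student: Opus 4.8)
The plan is to derive the corollary directly from the two results immediately preceding it, namely the characterization $mr^{-}(\mathbb{F},H)=2$ of complete multipartite graphs (Lemma~\ref{lm2.5}) and the $k$-path formula (Theorem~\ref{th1}); the only geometric input needed is the structure of $P_{n}^{k}$, which has just been recorded above.

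First I would treat the range $k\ge n$. Any two vertices of $P_{n}=v_{1}\cdots v_{n}$ are at distance at most $n-1\le k$ in $P_{n}$, hence adjacent in $P_{n}^{k}$; so $P_{n}^{k}=K_{n}$. For $n\ge 2$ the graph $K_{n}$ is complete multipartite (take $n$ singleton parts), so Lemma~\ref{lm2.5} gives $mr^{-}(\mathbb{F},P_{n}^{k})=2$, which is the third branch of the formula. (The degenerate value $n=1$ falls outside the scope of the statement, since then $P_{n}^{k}$ is the empty graph on one vertex, with minimum skew rank $0$.)

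Next, for $1\le k\le n-1$ I would use the paragraph above, which exhibits $P_{n}^{k}$ as an outcome of the recursive $k$-tree construction: $P_{n}^{k}[\{v_{1},\dots,v_{k+1}\}]=K_{k+1}$, and for each $j=2,\dots,n-k$ the set $\{v_{j},\dots,v_{k+j-1}\}$ induces a $K_{k}$ to all of whose vertices the new vertex $v_{k+j}$ is adjacent. Thus $P_{n}^{k}$ is a $k$-tree on $n$ vertices, and since exactly $v_{1}$ and $v_{n}$ have degree $k$ while every newly added vertex acquires degree at least $k+1$, it is in fact a $k$-path. Theorem~\ref{th1} then applies verbatim with the same $n$ and $k$, giving $mr^{-}(\mathbb{F},P_{n}^{k})=n-k$ when $n-k$ is even and $n-k+1$ when $n-k$ is odd. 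Combining the two cases yields all three branches.

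I do not expect a genuine obstacle here: the corollary is a specialization, and the substantive work --- showing that a power of a path is a $k$-path, and computing the minimum skew rank of $k$-paths --- has already been carried out. The only points warranting a line of care are the boundary overlaps, e.g.\ $k=n-1$, where $P_{n}^{k}=K_{n}=K_{k+1}$ and $n-k=1$ is odd, so both the second branch of the corollary and the ``$n=k+1$'' base case inside the proof of Theorem~\ref{th1} consistently return $mr^{-}=2$.
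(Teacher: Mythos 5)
Your proposal is correct and follows essentially the same route as the paper: the paper likewise notes that $P_n^k=K_n$ when $k\ge n$ (so Lemma~\ref{lm2.5} gives rank $2$), and for $k\le n-1$ verifies via the recursive construction that $P_n^k$ is a $k$-path and then invokes Theorem~\ref{th1}. Your extra remark identifying $v_1$ and $v_n$ as the only degree-$k$ vertices just makes explicit a step the paper leaves implicit.
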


%\begin{proof} Let $P_n=v_1v_2\dots v_n$.
%Obviously, $P_n^k$ is a complete graph if $k\ge n$. Suppose that
%$k\le n-1$. Obviously, $P_n^k[\{v_1,v_2, \dots, v_{k+1}\}]=K_{k+1}$,
%and if $k\le n-2$, then for $j=2,3,\dots,n-k$,
%$P_n^k[\{v_{j},v_{j+1}, \dots, v_{k+j-1}\}]=K_{k}$, and $v_{k+j}$ is
%adjacent to $v_{j},v_{j+1}, \dots, v_{k+j-1}$. Thus $P_{n}^{k}$ is a
%$k$-path. Then the result follows from Lemma~\ref{lm2.5} and Theorem
%\ref{th1}.
%\end{proof}

Finally, we gave an observation.

\begin{Theorem} \label{add}
Let  $G$ be a connected  graph with no even cycles and $\mathbb{F}$
a field. Then
 $mr^{-}(\mathbb{F},G)=2match(G)=MR^{-}(\mathbb{F},G)$.
\end{Theorem}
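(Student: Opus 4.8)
The plan is to reduce everything to the single inequality $mr^{-}(\mathbb{F},G)\ge 2\,match(G)$. Indeed, Lemma~\ref{lm2.8} gives $MR^{-}(\mathbb{F},G)=2\,match(G)$, and trivially $mr^{-}(\mathbb{F},G)\le MR^{-}(\mathbb{F},G)$ since the minimum of a set of ranks is at most its maximum; hence once the lower bound is established, all three quantities must coincide. To obtain the lower bound I would exhibit an induced subgraph $H$ of $G$ on exactly $2\,match(G)$ vertices having a \emph{unique} perfect matching, and then invoke Lemmas~\ref{lm2.6} and~\ref{lm2.9}.

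For the construction, fix a maximum matching $M$ of $G$ and let $U\subseteq V_G$ be the set of the $2\,match(G)$ vertices saturated by $M$; put $H=G[U]$. Then $M$ is a perfect matching of $H$, so it suffices to show it is the only one. The key observation is that if a graph had two distinct perfect matchings $M_1$ and $M_2$, then in $M_1\triangle M_2$ every vertex has degree $0$ or $2$ and the edges alternate between $M_1$ and $M_2$, so $M_1\triangle M_2$ is a nonempty disjoint union of even cycles; in particular such a graph would contain an even cycle. Since $H$ is an induced subgraph of $G$ and $G$ has no even cycle, $H$ has no even cycle either, so $H$ cannot have two perfect matchings. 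Thus $H$ has a unique perfect matching.

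To finish, Lemma~\ref{lm2.6} applied to $H$ yields $mr^{-}(\mathbb{F},H)=|V_H|=2\,match(G)$, and Lemma~\ref{lm2.9} (monotonicity under passing to induced subgraphs) gives $mr^{-}(\mathbb{F},G)\ge mr^{-}(\mathbb{F},H)=2\,match(G)$. Combining this with the upper bound from the first paragraph, $2\,match(G)\le mr^{-}(\mathbb{F},G)\le MR^{-}(\mathbb{F},G)=2\,match(G)$, which forces equality throughout and proves the theorem. (The degenerate case $match(G)=0$, where $G$ is edgeless, is immediate.)

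I do not expect a serious obstacle: the only nontrivial ingredient is the fact that the symmetric difference of two distinct perfect matchings is a disjoint union of even cycles, together with the trivial remark that ``having no even cycle'' is inherited by induced subgraphs. It is worth noting that this argument does not actually use connectedness of $G$. A more computational alternative would be an induction on $|V_G|$ using the cut-vertex formula of Lemma~\ref{lm2.7} and the structural fact that in a graph with no even cycle every block is an edge or an odd cycle, verifying that $mr^{-}$ and $2\,match$ satisfy the same recursion at a cut vertex; this works but is considerably more tedious than the unique-perfect-matching route above.
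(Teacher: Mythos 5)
Your argument is correct and is essentially identical to the paper's proof: both take the induced subgraph $H$ on the vertices saturated by a maximum matching, show the perfect matching of $H$ is unique via the symmetric-difference-of-matchings-is-even-cycles argument, and conclude with Lemmas~\ref{lm2.6}, \ref{lm2.9}, and \ref{lm2.8}. Your added remarks (that connectedness is not used, and the degenerate edgeless case) are minor refinements, not a different route.
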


\begin{proof}
By Lemma \ref{lm2.8}, $mr^{-}(\mathbb{F},G)\leq
MR^{-}(\mathbb{F},G)=2match(G)$. Let $M$ be a maximum matching of
$G$ and $\{v_{1},\cdots,v_{k}\}$ the vertices in $M$. Then $M$ is a
perfect matching of $H=G[\{v_{1},\cdots,v_{k}\}]$. This perfect
matching is unique. Otherwise, the graph induced by the vertices of
the symmetric difference of two (different) perfect matchings of $H$
consists of even cycles, which is impossible because $G$ contains no
even cycles. By Lemmas \ref{lm2.9} and  \ref{lm2.6},
$mr^{-}(\mathbb{F},G)\geq mr^{-}(\mathbb{F},H)=2match(G)$. The
result follows.
\end{proof}

Note that a tree has no (even) cycles. By previous theorem we have the following result.

\begin{Corollary} \cite{AB}
Let $G$ be a tree and  $\mathbb{F}$  a field. Then
$mr^{-}(\mathbb{F},G)=2match(G)$ $=MR^{-}(\mathbb{F},G)$.
\end{Corollary}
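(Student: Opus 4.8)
The plan is to exploit the two auxiliary results already available: Lemma~\ref{lm2.8}, which identifies $MR^{-}(\mathbb{F},G)$ with $2\,match(G)$ unconditionally, and Lemma~\ref{lm2.6}, which says a graph with a \emph{unique} perfect matching has full minimum skew rank. The upper bound $mr^{-}(\mathbb{F},G)\le MR^{-}(\mathbb{F},G)=2\,match(G)$ is then free, so the whole content lies in the matching lower bound $mr^{-}(\mathbb{F},G)\ge 2\,match(G)$.

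First I would fix a maximum matching $M$ of $G$, let $U$ be its set of saturated vertices, and set $H=G[U]$. By construction $M$ is a perfect matching of $H$, so $|U|=2\,match(G)$. The key step is to argue that this perfect matching is \emph{unique} in $H$: if $M'$ were a second perfect matching of $H$, then the symmetric difference $M\triangle M'$ is a disjoint union of cycles that alternate between edges of $M$ and edges of $M'$, hence of even cycles; since $G$ (and therefore $H$) has no even cycles, $M\triangle M'=\varnothing$, i.e.\ $M=M'$. So $H$ has a unique perfect matching.

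With uniqueness in hand, Lemma~\ref{lm2.6} gives $mr^{-}(\mathbb{F},H)=|V_H|=2\,match(G)$, and Lemma~\ref{lm2.9} (monotonicity of minimum skew rank under taking induced subgraphs, with $H$ an induced subgraph of $G$) yields $mr^{-}(\mathbb{F},G)\ge mr^{-}(\mathbb{F},H)=2\,match(G)$. Combining with the upper bound from the first paragraph gives the chain $mr^{-}(\mathbb{F},G)\le 2\,match(G)\le mr^{-}(\mathbb{F},G)$, forcing all three quantities in the statement to coincide.

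The only point requiring care — and the one I would treat as the ``obstacle,'' though it is more a bookkeeping subtlety than a real difficulty — is the symmetric-difference argument: one must note that each component of $M\triangle M'$ has equal numbers of $M$- and $M'$-edges (a path component would leave a vertex unsaturated by one of the two perfect matchings, contradicting that both are perfect on all of $U$), so every nontrivial component is genuinely an even cycle of $G$, which the hypothesis forbids. Everything else is a direct invocation of the cited lemmas, and no use of connectivity of $G$ is actually needed beyond what the lemmas already assume.
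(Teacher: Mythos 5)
Your proof is correct and follows essentially the same route as the paper: the paper deduces this corollary in one line from Theorem~\ref{add} (a tree is a connected graph with no even cycles), and your argument is exactly the paper's proof of that theorem --- Lemma~\ref{lm2.8} for the upper bound, then the symmetric-difference uniqueness argument combined with Lemmas~\ref{lm2.6} and~\ref{lm2.9} for the lower bound --- specialized to trees, where it is even simpler since a tree has no cycles at all.
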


 Let $G$ be a connected unicyclic graph with a unique cycle $C$. If $C$ is odd, then by Theorem \ref{add},
 $mr^{-}(\mathbb{F},G)=2match(G)$. Recall that it was shown in \cite{LMD} that if $C$ is odd, then 
 $mr^{-}(\mathbb{R},G)=2match(G)$, and if $C$ is even, then
 $mr^{-}(\mathbb{R},G)=2match(G)$ or $2match(G)-2$.

\bigskip
{\bf Acknowledgment.} This work was supported by the National
Natural Science Foundation of China (No.~11071089)and the Guangdong
Provincial Natural Science Foundation of China (no.~S2011010005539).

%%%%%%%%%%%%%%%%%%%%%%%%%%%%%%%%%%%%%%%%%%%%%%%%%%%%%%%%%%%%%


\begin{thebibliography}{1}



\bibitem{LMD} L.M. DeAlba,
\newblock  Acyclic and unicyclic graphs whose minimum skew rank is equal to the minimum skew rank of
a diamtrical path,
\newblock Preperint, arXiv:1107.2170v1.

\bibitem{DKT} L.M. DeAlba, E. Kerzner, S. Tucker,
\newblock  A note on the minimum skew rank of powers of
paths,
\newblock Preprint. arXiv:1107.2450v1.

\bibitem{De}  L. DeLoss.
\newblock  Results on minimum skew rank of matrices described by a
graph,
\newblock Masters Thesis, Iowa State University,  May 2009.

\bibitem{FH} S. Fallat, L. Hogben,
\newblock  The minimum rank of symmetric matrices described by a graph: A
survey.
\newblock   Linear Algebra Appl. 426 (2007) 558--582.


\bibitem{AB} IMA-ISU research group on minimum rank
(M. Allion, E. Bodine, L.M. DeAlba, J. Debnath, L. DeLoss, C.
Garnett, J. Grout, L. Hogben, B. Im, H. Kim, R. Nair, O. Pryporova,
K. Savage, B. Shader, A.W. Wehe),
\newblock  Minimum rank of skew-symmetric
matrices described by a graph,
\newblock   Linear Algebra Appl. 432 (2010)
2457--2472.

\bibitem{Ro} D.J. Rose,
\newblock  On simple characterizations of $k$-trees,
\newblock   Discrete Math.  7 (1974) 317--322.

\bibitem {Sh} D.R. Shier,
\newblock  Some aspects of perfect elimination orderings in chordal graphs,
\newblock  Discrete Appl. Math. 7 (1984) 325--331.

\bibitem{We}
D.B. West,
\newblock   Introduction to Graph Theory,
\newblock  Prentice Hall, New Jersey,
2001.

\end{thebibliography}
\end{document}